\newcommand{\sect}[1]{\section{#1}\setcounter{equation}{0}}
\newcommand{\subsect}[1]{\subsection{#1}}
\font\mbn=msbm10 scaled \magstep1
\font\mbs=msbm7 scaled \magstep1
\font\mbss=msbm5 scaled \magstep1
\newcommand{\N}       { \mathbb{N}}
\newcommand\Co           {{\mathbb C}}
\newtheorem{Th}{Theorem}[section]
\newtheorem{Lm}[Th]{Lemma}
\newtheorem{Proposition}[Th]{Proposition}
\newtheorem{R}[Th]{Remark}
\newtheorem*{Claim}{Claim}
\begin{document}
 
\title[On the Stable Rank of Stein Algebras]{On the Bass Stable Rank of Stein Algebras}
\author{Alexander Brudnyi}
\address{Department of Mathematics and Statistics\newline
\hspace*{1em} University of Calgary\newline
\hspace*{1em} Calgary, Alberta\newline
\hspace*{1em} T2N 1N4}
\email{abrudnyi@ucalgary.ca}
\keywords{Stable rank, Stein space, holomorphic function, Cartan's theorems, Jacobson radical, invertible matrix}
\subjclass[2010]{Primary 32E10. Secondary 18F25.}

\thanks{Research supported in part by NSERC}

\begin{abstract}
We compute the Bass stable rank of the ring $\Gamma(X,\mathcal O_X)$ of global sections of the structure sheaf $\mathcal O_X$ on a finite-dimensional Stein space $(X,\mathcal O_X)$ and then apply this result to the problem of the factorization of invertible holomorphic matrices on $X$.
 \end{abstract}
 
\date{}

\maketitle

\sect{Formulation of Main Results}
\subsection{}
Let $A$ be an associative ring with identity $1$. An element $a=(a_1,\dots, a_n)\in A^n$ is  {\em unimodular} if there exists $b=(b_1,\dots, b_n)\in A^n$ such that $\langle b,a\rangle=\sum_{i=1}^n b_i a_i=1$. By $U_n(A)\subset A^n$ we denote the set of unimodular elements of $A^n$. An element $a=(a_1,\dots, a_n)\in U_n(A)$ is said to be {\em reducible} if there exist $c_1,\dots, c_{n-1}\in A$ such that
\[
(a_1+c_1 a_n,\dots, a_{n-1}+c_{n-1}a_n)\in U_{n-1}(A).
\]
$A$ is said to have a stable rank at most $n-1$ if every $a\in U_n(A)$ is reducible. The {\em stable rank} of $A$, denoted by ${\rm sr}(A)$, is the least $n-1$ with this property.

The concept of the stable rank introduced by Bass \cite{B}  plays an important role in some stabilization problems of algebraic $K$-theory  analogous to that of dimension in topology.
Despite a simple definition, ${\rm sr}(A)$ is often quite difficult to calculate even for relatively uncomplicated rings $A$ (cf. \cite{V1}). In this note we compute the stable rank of the ring $\Gamma(X,\mathcal O_X)$ of global sections of the structure sheaf $\mathcal O_X$ on a finite-dimensional Stein space $(X,\mathcal O_X)$. (For basic facts about  Stein spaces
we refer the readers to the book \cite{GR}.)
Our research is motivated by the recent work of
Ivarsson and Kutzschebauch \cite{IK1}, \cite{IK2}
which solves the Vaserstein problem on the factorization of invertible holomorphic matrices on a finite-dimensional reduced Stein space posed by Gromov \cite{G}, see section 1.2 below.

Recall that for a finite-dimensional complex analytic space $(X,\mathcal O_X)$ there is a natural algebra homomorphism $\hat{\,}: \Gamma(X,\mathcal O_X)\rightarrow C(X)$ with image $\mathcal O(X)$ the ring of holomorphic functions on $X$, injective if $(X,\mathcal O_X)$ is reduced. A space $(X,\mathcal O_X)$  is said to be {\em Stein} if it is {\em holomorphically convex} (i.e., for each infinite discrete set  $D\subset X$ there exists $ f\in \mathcal O(X)$ which is unbounded on $D$) and {\em holomorphic separable} (i.e., for all $x,y\in X$, $x\ne y$, there exists $ f\in \mathcal O(X)$  such that $ f(x)\ne  f(y)$). 
In what follows, sometimes for brevity we omit the structure sheaf $\mathcal O_X$ in the notation of the Stein space, i.e., write $X$ instead of $(X,\mathcal O_X)$. By ${\rm dim}\, X$ we denote the complex dimension of $X$.

Our main result reads as follows.
\begin{Th}\label{teo1}
Let $(X,\mathcal O_X)$ be a finite-dimensional Stein space.  Then \[
{\rm sr}(\Gamma(X,\mathcal O_X))={\rm sr}(\mathcal O(X))=\left\lfloor \frac 1 2\, {\rm dim}\, X \right\rfloor +1.
\]
\end{Th}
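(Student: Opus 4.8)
The plan is to prove the two equalities separately, and for the second to establish matching upper and lower bounds on ${\rm sr}(\mathcal O(X))$. I would first dispose of the equality ${\rm sr}(\Gamma(X,\mathcal O_X))={\rm sr}(\mathcal O(X))$ via the homomorphism $\hat{\,}:\Gamma(X,\mathcal O_X)\to\mathcal O(X)$. Its kernel $\mathcal N$ consists of the nilpotent sections of $\mathcal O_X$, and the key observation is that $\mathcal N$ lies in the Jacobson radical of $\Gamma(X,\mathcal O_X)$: a globally nilpotent section is carried into every maximal ideal, so $1+\mathcal N$ consists of units. Since the Bass stable rank is unchanged when one factors out an ideal contained in the Jacobson radical, this yields the first equality and lets me work from now on with the reduced space and with $\mathcal O(X)$, identifying an $n$-tuple $a\in U_n(\mathcal O(X))$ with a holomorphic map $X\to\mathbb C^n\setminus\{0\}$ and reducibility with the deformability of this map, by admissible shears $a_i\mapsto a_i+c_ia_n$, into $\mathbb C^{n-1}\setminus\{0\}$.

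For the upper bound ${\rm sr}(\mathcal O(X))\le m:=\lfloor\tfrac12\dim X\rfloor+1$ I must reduce an arbitrary $a=(a_1,\dots,a_{m+1})\in U_{m+1}(\mathcal O(X))$. The two inputs are Hamm's theorem, that a Stein space of complex dimension $\dim X$ has the homotopy type of a CW complex of real dimension at most $\dim X$, and the Oka--Grauert principle, which for the Stein space $X$ identifies the holomorphic reduction problem with its purely continuous counterpart. Writing $\mathbb C^{m+1}\setminus\{0\}\simeq S^{2m+1}$ and noting that the reduced map must land in $\mathbb C^{m}\setminus\{0\}\simeq S^{2m-1}$, the obstruction to compressing a map of a space of homotopy dimension at most $\dim X$ into the subsphere lies in $H^{2m}(X;\mathbb Z)$ together with higher-degree groups; all of these vanish because $2m=2\lfloor\tfrac12\dim X\rfloor+2\ge\dim X+1>\dim X$. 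Hence the reduction exists topologically, and the Oka principle upgrades it to a holomorphic one.

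The reverse inequality ${\rm sr}(\mathcal O(X))\ge m$ requires producing, on \emph{every} finite-dimensional Stein space of dimension $\dim X$, a single non-reducible $a\in U_m(\mathcal O(X))$. By the same obstruction-theoretic computation, the topological obstruction to reducing an element of $U_m$ sits in $H^{2\lfloor\tfrac12\dim X\rfloor}(X;\mathbb Z)$ --- a group that is nonzero only when the topology of $X$ is sufficiently rich, and that vanishes outright when $X$ is, say, a ball in $\mathbb C^{\dim X}$. Since the asserted value of the stable rank depends only on $\dim X$ and not on the topology of $X$, the lower bound cannot be topological in nature and must exploit the complex structure directly. I would therefore build an explicit unimodular $m$-tuple on a local model $B\subset\mathbb C^{\dim X}$ around a smooth point, choosing the components so that the common zero locus of any $m-1$ of their shears is forced, by the codimension count $\dim X-(m-1)=\lceil\tfrac12\dim X\rceil\ge 1$, to be a nonempty analytic set, and then extract a holomorphic (index/characteristic-class type) obstruction to clearing that locus by holomorphic shears --- an obstruction that is invisible to continuous deformations. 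Cartan's Theorems A and B would serve to extend the construction from $B$ to all of $X$ while preserving non-reducibility.

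The step I expect to be the main obstacle is precisely the proof that this obstruction genuinely survives in the holomorphic category: that no holomorphic choice of shear coefficients $c_i$ removes the common zeros of the sheared tuple, even though on a topologically trivial $X$ every continuous obstruction is absent. Establishing that the failure of reduction is detected by a rigid holomorphic invariant --- one insensitive to the homotopy type of $X$ --- and that it persists under the globalization from the local model, is the crux of the argument and the point at which the complex-analytic, as opposed to merely topological, content of the theorem is decisive.
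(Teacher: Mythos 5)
Your reduction of the problem to the reduced space via the Jacobson radical is exactly the paper's first step (Lemma \ref{lem2.1} together with Vaserstein's invariance of the stable rank modulo the radical), so that part is sound. The genuine gap is in your upper bound. You identify reducibility of $a=(a_1,\dots,a_{m+1})\in U_{m+1}(\mathcal O(X))$ with the compressibility of the associated map $X\to\mathbb C^{m+1}\setminus\{0\}$ into $\mathbb C^{m}\setminus\{0\}$, and then kill the obstructions by the dimension count $2m>\dim X$. But reducibility is not a homotopy-theoretic property of that map: the deformation must be realized by the very special shears $a_i\mapsto a_i+c_ia_{m+1}$, and the resulting $m$-tuple must be unimodular, not merely homotopic to something nonvanishing. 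If your equivalence were correct, then on a contractible Stein space (a ball in $\mathbb C^d$) every obstruction would vanish and the stable rank would be $1$ --- which contradicts the lower bound you assert two paragraphs later, and you yourself observe there that the lower bound ``cannot be topological in nature.'' The correct criterion (Lemma \ref{lem2.6}, from Corach--Larotonda) is that $(a_1,\dots,a_m,a)$ is reducible iff the class of $(a_1,\dots,a_m)$ in $U_m(\mathcal O(X)/J(a))$ lifts to $U_m(\mathcal O(X))$. This moves the obstruction theory to the zero locus $\mathcal Z(a)$, whose dimension is at most $\dim X-1\le 2m-2<2m-1$; it is \emph{that} inequality, not $2m>\dim X$, which makes the extension of a nonvanishing map from $\mathcal Z(a)$ to $X$ unobstructed, after which Ramspott's Oka principle makes it holomorphic. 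Even then one is not done: $\mathcal O(X_f)/J(f)$ is in general non-reduced, and lifting a unimodular tuple through the nilpotent layer requires the matrix-theoretic Lemmas \ref{lem2.2}--\ref{lem2.4} (completing the unimodular column to an invertible matrix --- this is where $m-l\ge\frac{n}{2}$ and stable triviality of bundles enter --- and factoring matrices congruent to $I$ mod the radical). None of this is visible in your outline.

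For the lower bound your diagnosis is right --- the obstruction must be holomorphic, not topological --- but you stop at announcing a plan and explicitly flag its crux (``that no holomorphic choice of shear coefficients removes the common zeros'') as unresolved, so nothing is actually proved. The paper closes exactly this point by exhibiting the explicit tuple $(z_1,z_3,\dots,z_{2s-3},\,z_1z_2+z_3z_4+\cdots+z_{2s-3}z_{2s-2}-1)$ and invoking the Corach--Su\'arez theorem that it is irreducible in $U_{s}(A(\mathbb B^d))$ for the ball algebra; irreducibility then transfers to $\mathcal O(X)$ by restricting to a coordinate neighbourhood of a regular point of a top-dimensional component. Without that input (or an equivalent rigidity argument) your lower bound is an unproved assertion, and as written the proposal establishes neither inequality.
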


The particular case of the theorem for $X=\mathbb C$ was proved earlier in \cite{Ru} and \cite{CS1}. 

\subsection{}
In this part using Theorem \ref{teo1} we extend and sharpen some results of \cite{IK1}, \cite{IK2}.\smallskip

For an associative ring $A$ with $1$ by $M_{k,n}(A)$ we denote the set of $k\times n$ matrices with entries in $A$, by $GL_n(A)\subset M_{n,n}(A)$ the group of invertible matrices, and by $SL_n(A)\subset GL_n(A)$ the subgroup of matrices with determinant $1$. 

Recall that a matrix in $SL_n(A)$ is said to be {\em elementary} if it differs from the identity matrix $I_n$ by at most one non-diagonal entry.

By $E_n(A)$ we denote the subgroup of $SL_n(A)$ generated by all elementary matrices. 

Let $t_n(A)$ denote the minimal number $t$ such that every matrix in $E_n(A)$ is the product of $t$ matrices such that each of them is either upper triangular with $1$ along the main diagonal or lower triangular with $1$ along the diagonal. 

It is well-known that if $A$ is a field, then
$E_n(A)=SL_n(A)$ and each $t_n(A)<\infty$. In general this is not always true. For instance, for $A= \mathbb F [x_1,\dots, x_d]$,  the ring of polynomials in $d$ indeterminates over a field, $E_2(A)\subsetneq SL_2(A)$ if $d\ge 2$, see \cite[Prop.\,(7.3)]{C}, but
$E_n(A)=SL_n(A)$ for all $d$ and $n\ge 3$, see \cite[Cor.\,6.7]{S}.\footnote{Note that for $d=1$, $E_n(A)=SL_n(A)$ for all $n$ as $A$ is an Euclidean ring.} However, even in this case  $t_n(A)=\infty$ if $\mathbb F$ is of infinite transcendence degree over its prime field, see \cite[Prop.\,(1.5)]{W}. 

Further, it was proved by Vaserstein in \cite[Th.\,4]{V3} that for $A=C(X)$, the algebra of complex-valued continuous functions on a $d$-dimensional normal topological space $X$,  $E_n(A)$ coincides with the set of {\em null-homotopic} maps, i.e., maps in $SL_n(A):=C(X, SL_n(\mathbb C))$ homotopic (in this class of maps) to a constant map, and, moreover,  there exists a constant $v(d)\in\N$ depending on $d$ only such that $\sup_n t_n(A)\le v(d)$  (see also \cite[Lm.\,7]{DV}). 

A similar problem for the algebra $\mathcal O(X)$, where $X$ is a finite-dimensional reduced Stein space, was posed by Gromov in \cite[3.5.G]{G}  (the paper is devoted to the extension of the classical Oka-Grauert theorem) 
and solved recently by Ivarsson and Kutzschebauch in \cite{IK1} (see also \cite{IK2}) based on \cite[Th.\,4]{V3} and \cite[Th.\,8.3]{F}. 

Specifically, they proved that $E_n(\mathcal O(X))$ coincides with the set of null-homotopic holomorphic maps of $SL_n(\mathcal O(X)):=\mathcal O(X, SL_n(\mathbb C))$ and that the number $t_n(\mathcal O(X))$ is bounded from above by a constant depending on $n$ and $d:={\rm dim}\, X$ tending to $\infty$ as $n\rightarrow\infty$. 
\begin{R}
{\rm Note that \cite[Th.\,3.1]{IK2} implies that $t_2(\mathcal O(X))\le v(2d)+2$  with $v(\cdot )$ as in \cite[Th.\,4]{V3} above.
This and \cite[Lm.\,7]{DV} produce a much better upper bound $\sup_n t_n(\mathcal O(X))\le v(2d)+3$.  

In addition, \cite[Lm.\,7]{DV},  Theorems 5.1, 5.2 and Proposition 4.1 of \cite{IK2} imply that $t_n(\mathcal O(X))=4$ for all $n\ge 2$ if $d=1$ and $\sup_n t_n(\mathcal O(X))=5$  if $d=2$. (In \cite{IK2} similar statements are formulated  for $t_2(\mathcal O(X))$ only.)}
\end{R}

Now, let us consider the case of the algebra $\Gamma(X,\mathcal O_X)$ for a finite-dimensional Stein space $X$. Clearly, the algebra homomorphism $\hat{\,}:\Gamma(X,\mathcal O_X)\rightarrow \mathcal O(X)$ induces a group homomorphism $\widehat{\,}_n: SL_n(\Gamma(X,\mathcal O_X))\rightarrow SL_n(\mathcal O(X))$, $(f_{ij})\mapsto (\hat{f}_{ij})$.

Using the main result of \cite{IK1},  some results of \cite{DV} and Theorem \ref{teo1} we prove
\begin{Proposition}\label{prop2.1}
\begin{itemize}
\item[(1)]
$H\in E_n(\Gamma(X,\mathcal O_X))$ if and only if $\widehat{H}_n\in E_n(\mathcal O(X))$.
Moreover, 
\[
\sup_n\, t_n(\Gamma(X,\mathcal O_X))\le v(2d)+5,
\]
where $d:={\rm dim}\, X$.
\item[(2)] There is a number $n(d)\in\N$ such that for all $n\ge n(d)$
\[
t_n(\Gamma(X,\mathcal O_X))\le 6.
\]
\item[(3)] If $d\in\{1,2\}$, then $SL_n(\Gamma(X,\mathcal O_X))=E_n(\Gamma(X,\mathcal O_X))$ for all $n$ and 
\[
t_n(\Gamma(X,\mathcal O_X))=4\ {for\ all}\ n\ge 2\quad {\rm if}\quad d=1\quad  {\rm and}\quad \sup_n\, t_n(\Gamma(X,\mathcal O_X))\le 7\quad {\rm if}\quad d=2.
\]
\end{itemize}
\end{Proposition}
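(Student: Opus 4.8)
The plan is to reduce all three statements to a single comparison between $A:=\Gamma(X,\mathcal O_X)$ and its reduction $B:=\mathcal O(X)$, where the surjection $\widehat{\,}\colon A\to B$ has kernel $I$. The first thing I would record is the structural fact underlying Theorem \ref{teo1}: every $f\in I$ is a section that is nilpotent at each stalk, so for any $a\in A$ the local inverses $\sum_{k\ge 0}(-af)^k_x$ (finite sums, since $(af)_x$ is nilpotent) glue to a global inverse of $1+af$; hence $I\subseteq \mathrm{Jac}(A)$. Together with Theorem \ref{teo1} this gives ${\rm sr}(A)={\rm sr}(B)=\lfloor d/2\rfloor+1$, and it is precisely the inclusion $I\subseteq\mathrm{Jac}(A)$ that lets me transfer factorization data from $B$ to $A$.

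For part (1), the implication $H\in E_n(A)\Rightarrow \widehat H_n\in E_n(B)$ is immediate because $\widehat{\,}_n$ carries elementary matrices to elementary matrices. For the converse, given $H\in SL_n(A)$ with $\widehat H_n\in E_n(B)$, I would write $\widehat H_n$ as a product of elementary matrices over $B$, lift each off-diagonal entry through the surjection $\widehat{\,}$, and thereby produce $H'\in E_n(A)$ with $\widehat{H'}_n=\widehat H_n$. Then $G:=(H')^{-1}H$ lies in the congruence subgroup $\ker\widehat{\,}_n$, i.e.\ $G=I_n+N$ with $N$ a matrix over $I$. Since every diagonal entry of $G$ is $\equiv 1\pmod I$ and hence a unit (as $I\subseteq\mathrm{Jac}(A)$), Gaussian elimination with these unit pivots, followed by the Whitehead lemma to absorb the resulting diagonal factor of determinant $1$, shows $G\in E_n(A)$; no stable-rank hypothesis is needed, so the equivalence holds for all $n\ge 2$. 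Tracking the number of unitriangular factors produced gives the estimate $t_n(A)\le t_n(B)+2$, and feeding in $\sup_n t_n(B)\le v(2d)+3$ from the Remark above yields $\sup_n t_n(A)\le v(2d)+5$.

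Parts (2) and (3) then follow by inserting the sharper bounds for $B$. For (2), the stabilization results of \cite{DV} and \cite{IK2} give $t_n(B)\le 4$ once $n$ exceeds some $n(d)$, whence $t_n(A)\le 6$. For (3) with $d\in\{1,2\}$, Theorems 5.1, 5.2 and Proposition 4.1 of \cite{IK2} give $SL_n(B)=E_n(B)$ for all $n$; since $\det H=1$ forces $\widehat H_n\in SL_n(B)=E_n(B)$, part (1) upgrades this to $SL_n(A)=E_n(A)$ for all $n$, and the generic estimate gives $\sup_n t_n(A)\le t_n(B)+2\le 7$ when $d=2$. The case $d=1$ needs the sharp value: the lower bound $t_n(A)\ge t_n(B)=4$ is automatic because $\widehat{\,}_n$ sends a factorization of $H$ to one of $\widehat H_n$ of the same length, and for the matching upper bound I would not route through $B$ but apply the stable-rank-one factorization lemma of \cite{DV} directly to $A$, which is legitimate since ${\rm sr}(A)=1$ by Theorem \ref{teo1}; this yields $t_n(A)=4$ for all $n\ge 2$.

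The main obstacle is the bookkeeping hidden in the inequality $t_n(A)\le t_n(B)+2$: clearing $G=I_n+N$ by elimination naturally produces a diagonal matrix of determinant $1$ with entries in $1+I$, and expressing this factor through the Whitehead lemma could a priori cost several extra unitriangular factors. Keeping the net overhead at two requires aligning the triangular type of the last lifted factor of $\widehat H_n$ with the first factor arising in the correction $G$, so that the two merge; this is where the freedom to perform the elimination of $G$ in either order is used. Verifying that this alignment is always available, and that the stable-rank value of Theorem \ref{teo1} genuinely lets the stable factorization lemmas of \cite{DV} apply to $A$ itself in the sharp $d=1$ case, is the technical heart of the argument.
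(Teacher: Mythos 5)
Your qualitative argument for part (1) is essentially the paper's: lift an elementary factorization of $\widehat H_n$ through the surjection $\widehat{\,}_n$ (Cartan B plus Lemma \ref{lem2.2}), observe that the discrepancy $G=I_n+N$ has entries of $N$ in $\Gamma(X,\mathfrak n(\mathcal O_X))=\mathscr J(\Gamma(X,\mathcal O_X))$, and dispose of $G$ by Gauss--Jordan elimination with unit pivots plus the Whitehead lemma (this is exactly Lemma \ref{lem2.3}). Likewise your treatment of $SL_n=E_n$ for $d\in\{1,2\}$ and your direct application of \cite[Lm.\,9]{DV} to $\Gamma(X,\mathcal O_X)$ using ${\rm sr}=1$ in the $d=1$ case match the paper.

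The genuine gap is the inequality $t_n(A)\le t_n(B)+2$ \emph{for all} $n$, on which all three of your quantitative bounds rest. You flag the alignment issue yourself, but the problem is worse than alignment: for $n\ge 3$ the Gauss--Jordan reduction of $G=I_n+N$ leaves a diagonal matrix ${\rm diag}(u_1,\dots,u_n)$ of determinant $1$ with $u_i\in 1+\mathscr J$, and the Whitehead lemma expresses this as a product of order $4(n-1)$ elementary matrices, not as $4$ triangular unipotent matrices of prescribed alternating type. So the kernel correction is not known to cost only two extra triangular factors at level $n$, and your derivations of $v(2d)+5$, of $t_n\le 6$ in (2), and of $\le 7$ in (3) do not close. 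The paper avoids this entirely: it verifies the $+2$ overhead only for $n=2$ (where a kernel matrix in $K_2$ really is a product of $4$ alternating triangular unipotent factors whose first factor's type can be chosen freely, allowing the two merges), obtaining $t_2(\Gamma(X,\mathcal O_X))\le t_2(\mathcal O(X))+2\le v(2d)+4$, and then passes to all $n$ by the Dennis--Vaserstein reduction $\sup_n t_n\le ut_n\le ut_2\le t_2+1$ of \cite[Lm.\,7]{DV} --- a step absent from your proposal. Similarly, in part (2) the paper does not transfer a stabilized bound from $\mathcal O(X)$; it applies \cite[Th.\,20(b)]{DV} directly to $\Gamma(X,\mathcal O_X)$, which is legitimate precisely because Theorem \ref{teo1} computes ${\rm sr}(\Gamma(X,\mathcal O_X))$ itself. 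To repair your argument, replace the general-$n$ overhead claim by the $n=2$ computation together with \cite[Lm.\,7]{DV}; the numerical conclusions are then the same.
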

\sect{Auxiliary Results} In this section we collect some results used in the proof of Theorem \ref{teo1}. 
\subsection{}
 Let $(X,\mathcal O_X)$ be a finite-dimensional Stein  space. By the Cartan and Oka theorem, the nilradical $\mathfrak{n}(\mathcal O_X)$ of $\mathcal O_X$ (i.e., the union of nilradicals of stalks $\mathcal O_x$, $x\in X$)
 is a coherent sheaf of ideals on $X$ and so by Cartan's Theorem B  we have the following exact sequence of global sections of sheaves
 \begin{equation}\label{eq2.1}
0\rightarrow \Gamma(X,\mathfrak{n}(\mathcal O_X))\rightarrow \Gamma(X,\mathcal O_X)\stackrel{r^*}{\rightarrow}\Gamma(X,\mathcal O_{{\rm red}\, X})\rightarrow 0,
\end{equation}
where 
$\mathcal O_{{\rm red}\, X}:=\mathcal O_X/\mathfrak{n}(\mathcal O_X)$ is the structure sheaf on the reduction of $X$.
\begin{Lm}\label{lem2.1}
 $\Gamma(X,\mathfrak{n}(\mathcal O_X))$ is the Jacobson radical of  $\Gamma(X,\mathcal O_X)$.
\end{Lm}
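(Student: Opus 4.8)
The plan is to prove the two inclusions separately, writing $A := \Gamma(X,\mathcal O_X)$, $N := \Gamma(X,\mathfrak n(\mathcal O_X))$, and $B := \Gamma(X,\mathcal O_{\mathrm{red}\,X})$, and using throughout the elementary criterion that in a commutative ring with $1$ an element $r$ lies in the Jacobson radical $J(A)$ if and only if $1-gr$ is invertible for every $g\in A$.

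For the inclusion $N\subseteq J(A)$ I would take $f\in N$ and $g\in A$. Since $N$ is an ideal, $gf\in N$, so at every point $x\in X$ the germ $(gf)_x$ lies in the nilradical of the stalk $\mathcal O_x$ and is therefore nilpotent. Consequently the germ $(1-gf)_x=1-(gf)_x$ is a unit of $\mathcal O_x$ for every $x$, since $1-n$ with $n$ nilpotent is inverted by the finite sum $1+n+n^2+\cdots$. I would then invoke the standard fact that a section of $\mathcal O_X$ whose germ is a unit at every stalk is invertible in $\Gamma(X,\mathcal O_X)$: the local inverses are unique in each stalk, hence agree on overlaps and glue, by the sheaf property of $\mathcal O_X$, to a global inverse. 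Thus $1-gf$ is invertible for all $g$, giving $f\in J(A)$.

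For the reverse inclusion I would exploit the exact sequence \eqref{eq2.1}, which identifies $B=A/N$ with the global sections of the reduced space $\mathrm{red}\,X$. Since the reduction of a Stein space is again a reduced Stein space, the homomorphism $\hat{\,}$ is injective on $B$ and identifies it with the ring $\mathcal O(\mathrm{red}\,X)$ of holomorphic functions. The key point is that this ring has trivial Jacobson radical: for each point $y$ the evaluation $f\mapsto f(y)$ is a surjection onto $\mathbb C$ (it hits every constant), so its kernel $\mathfrak m_y$ is a maximal ideal, and because $\mathrm{red}\,X$ is reduced a holomorphic function vanishing at every point is $0$; hence $J(B)\subseteq\bigcap_y\mathfrak m_y=0$.

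Finally I would combine the two steps with the purely ring-theoretic identity $J(A/N)=J(A)/N$, which holds precisely because $N\subseteq J(A)$: every maximal ideal of $A$ then contains $N$, so the maximal ideals of $A/N$ correspond exactly to those of $A$. From $J(B)=J(A/N)=0$ one reads off $J(A)\subseteq N$, and together with the first inclusion this yields $J(A)=N$. The step requiring the most care is the first inclusion, specifically the gluing of the stalkwise inverses into a genuine global inverse, whereas the remaining ingredients (triviality of $J$ for reduced Stein algebras and the quotient formula for the radical) are standard.
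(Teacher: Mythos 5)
Your proof is correct and follows essentially the same route as the paper: both inclusions rest on the exact sequence \eqref{eq2.1}, the vanishing of the Jacobson radical of $\Gamma(X,\mathcal O_{{\rm red}\, X})$ via evaluation at points of the reduced space, and the invertibility of $1-gf$ for $f\in\Gamma(X,\mathfrak n(\mathcal O_X))$. The only (harmless) difference is how that inverse is produced: the paper exhibits it as the globally defined series $\sum_{i\ge 0}(gf)^i$, which is locally a finite sum over a locally finite cover on which powers of $f$ vanish, whereas you invert $1-gf$ stalk by stalk and glue the local inverses using their uniqueness.
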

\begin{proof}
By definition, the {\em Jacobson radical }$\mathscr J(A)$ of a commutative ring $A$ is the intersection of all maximal ideals of $A$ and the Jacobson radical of $A/\mathscr J(A)$ is trivial. In our case, if $f\in \mathscr J(\Gamma(X,\mathcal O_{{\rm red}\, X}))$, then $\hat f(x)=0$ for all $x\in X$, i.e., $\hat f=0\in \mathcal O(X)$. Since $\hat{\,} : \Gamma(X, \mathcal O_{{\rm red}\, X})\rightarrow \mathcal O(X)$ is an isomorphism of algebras, $f=0$.
Thus $\mathscr J(\Gamma(X,\mathcal O_{{\rm red}\, X}))=\{0\}$  and 
due to \eqref{eq2.1}, $\mathscr J(\Gamma(X,\mathcal O_X))\subset\Gamma(X,\mathfrak{n}(\mathcal O_X))$. 

Next, let $g\in \Gamma(X,\mathfrak{n}(\mathcal O_X))$. In order to check that $g$ is in the Jacobson radical one must prove that $1-fg$ is invertible for all $f\in\Gamma(X,\mathcal O_X)$, see, e.g., \cite[Prop.\,1.9]{AM}. Indeed, by the definition of the nilradical, there exist a locally finite open cover $(U_i)_{i\in I}$ of $X$ and a family $\{n_i\}_{i\in I}\subset\N$ such that
$g^{n_i}|_{U_i}=0$, $i\in I$. This implies that 
$h:=\sum_{i=0}^\infty (fg)^i$ is a well-defined section of  $ \Gamma(X,\mathcal O_X)$. Moreover, for each $i\in I$,
\[
(1-fg)|_{U_i}\cdot h|_{U_i}=(1-fg)|_{U_i}
\cdot \sum_{k=0}^{n_i-1}(fg)^k|_{U_i}=(1-(fg)^{n_i})|_{U_i}=1.
\]
 This shows that $(1-fg)\cdot h=1$, as required. Thus $g\in \mathscr J(\Gamma(X,\mathcal O_X))$ and so $\Gamma(X,\mathfrak{n}(\mathcal O_X))\subset \mathscr J(\Gamma(X,\mathcal O_X))$.
 
This completes the proof of the lemma.
\end{proof}
\begin{R}\label{rem1}
{\rm (1) Arguing as in the proof of the lemma one obtains that if $s(t)=\sum_{i=0}^\infty c_i t^i$ is a formal power series in $t$, then $s(g)$ is a well-defined section of $\Gamma(X,\mathcal O_X)$ for each $g\in\Gamma(X,\mathfrak n(\mathcal O_X))$.\smallskip

\noindent (2) Due to Lemma \ref{lem2.1}, \cite[Lm.\,3]{V1} and the fact that $\mathcal O(X)$ is isomorphic to $\Gamma(X, \mathcal O_{{\rm red} X})$,
 \begin{equation}\label{equ2.2}
 {\rm sr}(\mathcal O(X))={\rm sr}(\Gamma(X, \mathcal O_{{\rm red} X}))=  {\rm sr}(\Gamma(X,\mathcal O_X)).
 \end{equation}
Moreover, if $X_i\subset X$, $i\in\N$, are connected components of $X$, then each $(X_i,\mathcal O_{X_i})$, $\mathcal O_{X_i}:=\mathcal O_X|_{X_i}$, is Stein and 
$\Gamma(X,\mathcal O_X)$ is isomorphic to the direct product of the  algebras $\Gamma(X_i,\mathcal O_{X_i})$, $i\in\N$. In particular,  cf. \cite[Lm.\,2]{V1},
\begin{equation}\label{equ2.3}
{\rm sr}(\Gamma(X,\mathcal O_X))=\sup_{i\in\N}\,{\rm sr}(\Gamma(X,\mathcal O_{X_i})).
\end{equation}
In view of \eqref{equ2.2}, \eqref{equ2.3}, without loss of generality we may assume that $(X,\mathcal O_X)$ in the hypothesis of Theorem \ref{teo1} is a connected reduced Stein space.}
\end{R}
\subsection{} We retain the notation of the previous section. For a  commutative unital algebra $A$ by $A^{-1}\subset A$ we denote the multiplicative group of invertible elements.
\begin{Lm}\label{lem2.2}
\[
(r^*)^{-1}\bigl(\Gamma(X,\mathcal O_{{\rm red}\, X})^{-1}\bigr)=\Gamma(X,\mathcal O_X)^{-1}.
\]
\end{Lm}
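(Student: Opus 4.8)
The plan is to prove the two inclusions separately, the inclusion $A^{-1}\subseteq (r^*)^{-1}\bigl(\Gamma(X,\mathcal O_{{\rm red}\,X})^{-1}\bigr)$ (writing $A:=\Gamma(X,\mathcal O_X)$) being immediate: any unital ring homomorphism sends invertible elements to invertible elements, so $r^*(A^{-1})\subseteq \Gamma(X,\mathcal O_{{\rm red}\,X})^{-1}$. The entire content therefore lies in the reverse inclusion, and the guiding principle is the general ring-theoretic fact that for a surjective homomorphism whose kernel is contained in the Jacobson radical, invertibility of the image forces invertibility of the element. Both ingredients are already in hand: the exact sequence \eqref{eq2.1} gives surjectivity of $r^*$, and Lemma \ref{lem2.1} identifies $\ker r^*=\Gamma(X,\mathfrak n(\mathcal O_X))$ with the Jacobson radical $\mathscr J(A)$.

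So I would take $a\in A$ with $r^*(a)\in \Gamma(X,\mathcal O_{{\rm red}\,X})^{-1}$ and first use surjectivity of $r^*$ to choose $b\in A$ with $r^*(b)=\bigl(r^*(a)\bigr)^{-1}$. Then $r^*(ab-1)=0$, so $g:=1-ab$ lies in $\mathscr J(A)$. Now I invoke the defining property of the Jacobson radical already exploited in the proof of Lemma \ref{lem2.1} (via \cite[Prop.\,1.9]{AM}), or equivalently the formal power series construction of Remark \ref{rem1}(1): the element $h:=\sum_{i=0}^\infty g^i$ is a well-defined section of $A$ and satisfies $(1-g)h=1$, i.e. $ab$ is invertible with inverse $h$. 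Hence $a(bh)=(ab)h=1$, and since $A$ is commutative a right inverse is a two-sided inverse, giving $a\in A^{-1}$ as required.

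The step I would expect to be the real crux is in fact already discharged by the preceding results, so what remains is essentially formal: the surjectivity of $r^*$ (which lets the inverse of $r^*(a)$ lift to $A$) and the invertibility of $1-g$ for $g\in\mathscr J(A)$ are precisely what the earlier subsection was arranged to supply. The only point requiring a moment's care is the passage from a one-sided inverse to a genuine inverse, which is harmless because $\Gamma(X,\mathcal O_X)$ is commutative; in a noncommutative setting one would simply run the analogous lifting argument on the other side as well.
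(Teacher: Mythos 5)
Your proof is correct and follows essentially the same route as the paper: lift the inverse of $r^*(a)$ via surjectivity of $r^*$, observe that $1-ab$ lies in $\Gamma(X,\mathfrak n(\mathcal O_X))$, and invert $ab$ by the geometric series construction from the proof of Lemma \ref{lem2.1}. The paper omits the easy inclusion and the remark about one-sided inverses, but these are harmless additions on your part.
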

\begin{proof}
Let $f, f'\in \Gamma(X,\mathcal O_X)$ be such that $r^*(f):=g\in \Gamma(X,\mathcal O_{{\rm red}\, X})^{-1}$ and
$r^*(f')=g^{-1}$, cf. \eqref{eq2.1}. Since $r^*$ is an algebra homomorphism,   $r^*(1-ff')=0$, i.e., $u:=1-ff'\in \Gamma(X,\mathfrak n(\mathcal O_X))$. As in the proof of Lemma \ref{lem2.1}, $v:=\sum_{i=0}^\infty u^i$ is a well-defined section of $\Gamma (X,\mathfrak n(\mathcal O_X))$ inverse to $1-u$. Hence, $f\cdot (f'v)=1$, i.e., 
$f\in \Gamma(X,\mathcal O_X)^{-1}$.
\end{proof}
\begin{Lm}\label{lem2.3}
Suppose  $A\in GL_m(\Gamma(X,\mathcal O_X))$ is such that all entries of $I_m-A$ belong to $\Gamma(X,\mathfrak n(\mathcal O_X))$. Then $A$ is a product of $(m+4)(m-1)$ elementary matrices and a matrix $\exp(h)\cdot I_m$ for some $h\in \Gamma(X,\mathfrak n(\mathcal O_X))$.
\end{Lm}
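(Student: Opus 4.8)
The plan is to reduce $A$ to a scalar matrix by Gaussian elimination over the ring $R:=\Gamma(X,\mathcal O_X)$, using crucially that the ideal $\mathfrak a:=\Gamma(X,\mathfrak n(\mathcal O_X))$ is the Jacobson radical of $R$ by Lemma \ref{lem2.1}. Write $A=(a_{ij})$, so the hypothesis says $A\equiv I_m\pmod{\mathfrak a}$ entrywise; in particular each diagonal entry $a_{ii}\in 1+\mathfrak a$ is invertible. First I would clear the first column below the diagonal by $m-1$ elementary row operations (left multiplications by the elementary matrices $E_{i1}(-a_{i1}a_{11}^{-1})$, $i=2,\dots,m$), and then clear the first row to the right of the diagonal by $m-1$ elementary column operations. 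Since $a_{i1},a_{1j}\in\mathfrak a$ for $i,j\ge 2$ and $a_{11}^{-1}\in 1+\mathfrak a$, every correction term lands in $\mathfrak a$, so the resulting lower block $A'$ of size $m-1$ again satisfies $A'\equiv I_{m-1}\pmod{\mathfrak a}$. Iterating and using that inverses of elementary matrices are elementary, after $2\sum_{k=1}^{m-1}k=m(m-1)$ elementary factors one obtains
\[
A=E\cdot \mathrm{diag}(d_1,\dots,d_m)\cdot E',
\]
where $E,E'$ are products of elementary matrices with $m(m-1)$ factors together and each $d_i\in 1+\mathfrak a$.

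Next I would split off the scalar part. Since $A\equiv I_m\pmod{\mathfrak a}$ we have $\det A\in 1+\mathfrak a$, and by Remark \ref{rem1}(1) the element $h:=\tfrac1m\log(\det A)=\tfrac1m\sum_{i\ge 1}\tfrac{(-1)^{i+1}}{i}(\det A-1)^i$ is a well-defined section of $\mathfrak a$ (division by $m$ is legitimate because $\mathfrak a$ is a $\Co$-subspace). Then $\exp(h)\in 1+\mathfrak a$ and $\det(\exp(h)I_m)=\exp(mh)=\det A=\prod_i d_i$, the last equality because elementary factors do not change the determinant. Setting $c_i:=d_i\exp(-h)\in 1+\mathfrak a$ gives $\prod_i c_i=1$ and
\[
\mathrm{diag}(d_1,\dots,d_m)=\mathrm{diag}(c_1,\dots,c_m)\cdot \exp(h)I_m ,
\]
so it remains only to factor the determinant-one diagonal matrix $\mathrm{diag}(c_1,\dots,c_m)$ into elementary matrices.

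Finally I would use a telescoping Whitehead argument. With $p_k:=c_1\cdots c_k$ (so $p_m=1$) one checks the identity $\mathrm{diag}(c_1,\dots,c_m)=\prod_{k=1}^{m-1}W_k$, where $W_k$ is the identity outside the $(k,k{+}1)$ block and equals $\mathrm{diag}(p_k,p_k^{-1})$ there. Each such block is a product of exactly four elementary matrices, for instance
\[
\begin{pmatrix}u&0\\0&u^{-1}\end{pmatrix}=\begin{pmatrix}1&0\\1&1\end{pmatrix}\begin{pmatrix}1&u^{-1}-1\\0&1\end{pmatrix}\begin{pmatrix}1&0\\-u&1\end{pmatrix}\begin{pmatrix}1&u^{-1}-u^{-2}\\0&1\end{pmatrix},
\]
valid for any unit $u$ (here $u=p_k\in 1+\mathfrak a$). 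This contributes $4(m-1)$ elementary factors. Combining with the $m(m-1)$ factors from the elimination and moving the central matrix $\exp(h)I_m$ to the right yields
\[
A=(\text{product of }(m+4)(m-1)\text{ elementary matrices})\cdot \exp(h)I_m,
\]
as claimed.

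The genuinely careful part of this argument is not any single identity but the bookkeeping: one must verify at every stage of the elimination that the matrix remains congruent to the identity modulo $\mathfrak a$, equivalently that the pivots stay invertible and the correction terms stay in the radical, and that forming $\log$, $\exp$ and dividing by $m$ on radical elements is legitimate, which is exactly what Remark \ref{rem1}(1) supplies. The only ``clever'' ingredient is the four-factor Whitehead identity above, whose verification is a direct $2\times 2$ computation, together with the telescoping expression for a determinant-one diagonal matrix; the matching of constants, $m(m-1)+4(m-1)=(m+4)(m-1)$, then comes out on the nose.
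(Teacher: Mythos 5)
Your proposal is correct and follows essentially the same route as the paper's proof: Gauss--Jordan elimination with addition operations (legitimate because entries congruent to $1$ modulo the radical are invertible), extraction of the scalar factor $\exp(h)\cdot I_m$ via $\log$ of the determinant as in Remark \ref{rem1}(1), and the Whitehead lemma for the residual determinant-one diagonal matrix, with the same count $m(m-1)+4(m-1)=(m+4)(m-1)$. The only differences are cosmetic: the paper splits off $\exp(h)\cdot I_m$ before eliminating, whereas you do it after, and you write out the telescoping Whitehead factorization explicitly where the paper merely cites it.
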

\begin{proof}
Clearly, ${\rm det}\,A=1-g$ for some $g\in  \Gamma(X,\mathfrak n(\mathcal O_X))$. Thus  $\log(1-g):=\sum_{i=1}^\infty \frac{g^i}{i}\in \Gamma(X,\mathfrak n(\mathcal O_X))$, cf. Remark \ref{rem1}. Similarly, $\exp\bigl(\log(1-g)\bigr):=\sum_{i=0}^\infty \frac{(\log(1-g))^i}{i!}\in\Gamma(X,\mathcal O_X)$ and it is readily seen that $\exp\bigl(\log(1-g)\bigr)=1-g$. We set 
\[
h:=\frac{\log(1-g)}{m}.
\]
Then $A=\exp(h)\cdot I_m\cdot \tilde A$, where $\tilde A$ satisfies the hypotheses of the lemma and  ${\rm det}\, \tilde A=1$. Applying to $\tilde A$ the Gauss-Jordan elimination process using only addition operations we present $\tilde A$ as a product of $m(m-1)$ elementary matrices and a diagonal matrix $D$, ${\rm det}\,D=1$, satisfying the hypotheses of the lemma.  Due to the Whitehead lemma, $D$ is a product of $4(m-1)$ elementary matrices. 

This proves the required statement.
\end{proof}

\subsection{} Let $(X,\mathcal O_X)$ be a reduced Stein space of complex dimension $n$ and $F\in M_{m,l}(\mathcal O(X))$ be a holomorphic  $m\times l$ matrix with  $1\le l<m$.
\begin{Lm}\label{lem2.4}
Suppose the family of all minors of order $l$ of $F$ does not have common zeros. If $m-l\ge \frac{n}{2}$, then $F$ can be extended to a matrix in  $GL_m(\mathcal O(X))$. 
\end{Lm}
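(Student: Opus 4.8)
The plan is to reinterpret the statement bundle-theoretically and then reduce it, via the Oka--Grauert principle, to a purely topological stability question. The hypothesis that the $l\times l$ minors of $F$ have no common zero means exactly that $F(x)$ has rank $l$ for every $x\in X$; since at each point some $l\times l$ minor is a unit of the local ring, one checks locally that $F$ realizes the free sheaf $\mathcal O_X^{\,l}$ as a holomorphic subbundle of $\mathcal O_X^{\,m}$ with locally free quotient. Thus, setting $r:=m-l$, we obtain a short exact sequence of holomorphic vector bundles $0\to \mathcal O_X^{\,l}\xrightarrow{\,F\,}\mathcal O_X^{\,m}\to Q\to 0$ with $Q$ a holomorphic vector bundle of rank $r$ on $X$. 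Extending $F$ to a matrix in $GL_m(\mathcal O(X))$ amounts to adjoining $r$ columns whose images frame a pointwise complement of $\operatorname{im}F$; since $X$ is Stein, the sequence splits holomorphically (by Cartan's Theorem B, $H^1\!\big(X,\mathcal{H}om(Q,\mathcal O_X^{\,l})\big)=0$), so such an extension exists if and only if $Q$ is holomorphically trivial.

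Next I would reduce triviality of $Q$ to topology. The holomorphic splitting gives $Q\oplus \mathcal O_X^{\,l}\cong \mathcal O_X^{\,m}$, so $Q$ is stably trivial. By the Oka--Grauert principle on the reduced Stein space $X$, a holomorphic vector bundle is holomorphically trivial precisely when it is topologically trivial; hence it suffices to prove that $Q$ is topologically trivial.

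The topological input comes in two pieces. First, a Stein space of complex dimension $n$ has the homotopy type of a CW complex of real dimension at most $n$ (Hamm), so we may replace $X$ by such a complex $Y$ with $\dim Y\le n$. Second, the stabilization map $BU(r)\to BU$ is a $(2r+1)$-equivalence, because its homotopy fibre $U/U(r)$ is $2r$-connected; consequently $[\,Y,BU(r)\,]\to[\,Y,BU\,]$ is a bijection whenever $\dim Y\le 2r$, i.e.\ a stably trivial complex vector bundle of rank $r$ over a complex of dimension at most $2r$ is trivial. Since the hypothesis $m-l\ge \tfrac n2$ gives $\dim Y\le n\le 2(m-l)=2r$, the bundle $Q$ is topologically, hence holomorphically, trivial, and $F$ extends as required.

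The decisive point, and the step I expect to be the main obstacle to get exactly right, is the numerical matching: the cancellation range $\dim Y\le 2r$ must line up with $m-l\ge n/2$, and this works only because the homotopy dimension of an $n$-dimensional Stein space is $n$ rather than $2n$. One must also ensure that $Q$ is genuinely locally free over the (possibly singular) reduced space $X$, which is where the hypothesis on the common zeros of the minors is used, and that the Oka--Grauert principle is invoked in a form valid for reduced Stein spaces.
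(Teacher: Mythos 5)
Your proposal is correct and follows essentially the same route as the paper: realize $F$ as a trivial rank-$l$ holomorphic subbundle of $\theta^m$, split off the quotient bundle via Cartan's Theorem B, observe it is stably trivial, use Hamm's theorem on the homotopy type of Stein spaces together with the stable range for complex vector bundles to get topological triviality, and conclude holomorphic triviality by the Oka--Grauert principle. The only cosmetic difference is that you derive the cancellation bound from the $(2r+1)$-connectivity of $BU(r)\rightarrow BU$ where the paper simply cites Husemoller.
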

\begin{proof}
The matrix $F$ determines a trivial holomorphic subbundle $\xi$ of rank $l$ of the trivial holomorphic vector bundle $\theta^m:=X\times\mathbb C^m$ on $X$ (so that the columns of $F$ are holomorphic sections of $\xi$ linear independent at each point of $X$). Let $\theta^m/\xi$ be the holomorphic quotient bundle. Since $X$ is Stein, by Cartan's Theorem B there exists a holomorphic subbundle $\eta$ of $\theta^m$ isomorphic to $\theta^m/\xi$ such that $\xi\oplus \eta=\theta^m$. Thus, $\eta$ is {\em stably trivial}. 
Since, due to \cite{H}, $X$ is homotopic to a $CW$ complex of dimension $n$ and ${\rm rank}\,\eta= m-l\ge \frac{n}{2}$,  bundle $\eta$ is (topologically) trivial, see, e.g., \cite[Th.\,3.4.7, 9.1.5]{Hus}. Therefore, by the Grauert theorem, $\eta$ is holomorphically trivial. In global coordinates of $\theta^m$ holomorphic sections $s_1,\dots, s_{m-l}$ trivializing  $\eta$ determine an $(m-l)\times m$ matrix $F'$ such that $(F,F')\in GL_m(\mathcal O(X))$ is an invertible holomorphic matrix extending $F$. 
\end{proof}
\subsect{} Let $A$ be a  commutative ring with identity $1$ and $J(a)\subset A$ be the principal ideal generated by $a\in A$. 
\begin{Lm}\label{lem2.6}
An element of the form $(a_1,\dots, a_n,a)\in U_{n+1}(A)$, $n\in\N$, is reducible if and only if the map $U_n(A)\rightarrow U_n(A/J(a))$ induced by the quotient homomorphism is surjective. 
\end{Lm}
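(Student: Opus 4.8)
The plan is to exploit that, since $A$ is commutative with identity, $J(a)=aA$, so two elements of $A$ have the same image under the quotient $\pi\colon A\to A/J(a)$ precisely when they differ by a multiple of $a$. Writing $\bar c:=\pi(c)$, I would first record the elementary fact that $\pi$ carries unimodular rows to unimodular rows: applying $\pi$ to a relation $\sum_{i=1}^n b_ia_i+ba=1$ witnessing $(a_1,\dots,a_n,a)\in U_{n+1}(A)$ and using $\pi(a)=0$ yields $\sum_{i=1}^n\bar b_i\bar a_i=1$, so $(\bar a_1,\dots,\bar a_n)\in U_n(A/J(a))$. Conversely, lifting a relation $\sum_i\bar z_i\bar y_i=1$ back to $A$ shows that $1-\sum_i z_iy_i\in J(a)=aA$, hence every $(\bar y_1,\dots,\bar y_n)\in U_n(A/J(a))$ is the image of some $(y_1,\dots,y_n)$ for which $(y_1,\dots,y_n,a)\in U_{n+1}(A)$.

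For the direction assuming surjectivity, given $(a_1,\dots,a_n,a)\in U_{n+1}(A)$ I would pass to $(\bar a_1,\dots,\bar a_n)\in U_n(A/J(a))$ and choose, by surjectivity, a preimage $(x_1,\dots,x_n)\in U_n(A)$. Since $\bar x_i=\bar a_i$, each $x_i-a_i\in aA$, say $x_i=a_i+c_ia$; then $(a_1+c_1a,\dots,a_n+c_na)=(x_1,\dots,x_n)\in U_n(A)$ exhibits reducibility. For the reverse direction I would run the same computation backwards: starting from an arbitrary $(\bar y_1,\dots,\bar y_n)\in U_n(A/J(a))$, the remark above produces $(y_1,\dots,y_n,a)\in U_{n+1}(A)$; reducibility supplies $c_i$ with $(y_1+c_1a,\dots,y_n+c_na)\in U_n(A)$, and this row maps onto $(\bar y_1,\dots,\bar y_n)$ because $c_ia\in J(a)$, proving surjectivity.

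The only point needing care is that these two implications do not concern literally the same tuple: surjectivity of $U_n(A)\to U_n(A/J(a))$ corresponds to the reducibility of every element of the form $(y_1,\dots,y_n,a)\in U_{n+1}(A)$, and it is the bijection-type correspondence $(y_1,\dots,y_n)\leftrightarrow(\bar y_1,\dots,\bar y_n)$ together with the identification of the fibers of $\pi$ with cosets $a_i+aA$ that makes ``reducible'' and ``in the image of $\pi$'' interchangeable. I expect no serious obstacle beyond keeping this quantifier bookkeeping straight and invoking $J(a)=aA$ to turn the congruence $x_i\equiv a_i$ into the explicit shift $x_i=a_i+c_ia$ demanded by the definition of reducibility.
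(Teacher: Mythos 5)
Your proof is correct and is precisely the straightforward argument the paper delegates to the Proposition in \cite[Sect.\,3]{CL}: unimodularity passes to the quotient since $\pi(a)=0$, any unimodular row mod $J(a)$ lifts to a unimodular row of length $n+1$ ending in $a$, and identifying the fibers of $\pi$ with cosets $y_i+aA$ turns ``lies in the image of $U_n(A)$'' into the shift $y_i+c_ia$ demanded by reducibility. You are also right to flag the quantifier issue -- the equivalence is between surjectivity and reducibility of \emph{all} unimodular rows with last entry $a$ -- and this is exactly the form in which the lemma is invoked in the proof of Theorem 1.1.
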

\begin{proof}
The proof is straightforward, see, e.g.,  the proof of the Proposition in \cite[Sect.\,3]{CL}. 
\end{proof}

\sect{Proof of Theorem \ref{teo1}}
\begin{proof}
As it was mentioned in Remark \ref{rem1}\,(2), it suffices to prove the theorem for a connected finite-dimensional reduced Stein space $(X,\mathcal O_X)$.  (In this case, $\Gamma(X,\mathcal O_X)\cong \mathcal O(X)$.) Under this assumption we set
\begin{equation}\label{eq3.1}
s(X):=\left\lfloor \frac 1 2\, {\rm dim}\, X \right\rfloor +1.
\end{equation}

First, we prove that
\begin{equation}\label{equ3.2}
{\rm sr}(\mathcal O(X))\le s(X).
\end{equation}

To this end we need to check  the following statement.
\begin{Claim}
Each element $(f_1,\dots,f_{s(X)},f)\in U_{s(X)+1} (\mathcal O_X)$ is reducible. 
\end{Claim}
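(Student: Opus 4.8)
The plan is to deduce the Claim from Lemma~\ref{lem2.6}. Writing $n:={\rm dim}\,X$, that lemma reduces the reducibility of every $(f_1,\dots,f_{s(X)},f)\in U_{s(X)+1}(\mathcal O(X))$ to the following: for each $f\in\mathcal O(X)$ the canonical map $U_{s(X)}(\mathcal O(X))\to U_{s(X)}(\mathcal O(X)/J(f))$ is surjective. First I would identify the target. Applying Cartan's Theorem~B to the coherent sheaf sequence $0\to f\mathcal O_X\to\mathcal O_X\to\mathcal O_X/f\mathcal O_X\to 0$ yields $\Gamma(X,f\mathcal O_X)=J(f)$, whence $\mathcal O(X)/J(f)\cong\Gamma(Y,\mathcal O_Y)$, where $(Y,\mathcal O_Y)$ is the Stein subspace cut out by $f$. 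Since on a Stein space a tuple is unimodular exactly when its components have no common zero (Cartan's Theorems~A and~B), the desired surjectivity becomes a geometric extension statement: every holomorphic map $\bar g$ from $Y$ to $\mathbb C^{s(X)}$ whose components have no common zero should extend to a holomorphic map $g\colon X\to\mathbb C^{s(X)}\setminus\{0\}$ with $g$ reducing to $\bar g$ modulo $J(f)$.

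To construct $g$ I would first extend $\bar g$ to some holomorphic $g_1\colon X\to\mathbb C^{s(X)}$ reducing to $\bar g$ modulo $J(f)$ (Cartan's extension theorem, i.e. surjectivity of the restriction $\Gamma(X,\mathcal O_X^{\,s(X)})\to\Gamma(Y,\mathcal O_Y^{\,s(X)})$); by construction $g_1$ is zero-free on $Y$. It then remains to deform $g_1$, fixing it modulo $J(f)$, into a map avoiding the origin on all of $X$. For this I would invoke the Oka principle with interpolation for the trivial bundle with fiber the Oka manifold $\mathbb C^{s(X)}\setminus\{0\}$, interpolating along the subvariety $\{f=0\}$ (see \cite{F}): such a holomorphic $g$ exists as soon as a \emph{continuous} zero-free extension exists. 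This converts the problem into a purely homotopy-theoretic one.

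For the homotopy step, observe that $\mathbb C^{s(X)}\setminus\{0\}$ is homotopy equivalent to $S^{N}$ with $N:=2s(X)-1$. By \cite{H} the Stein spaces $X$ and $Y$ have the homotopy types of $CW$ complexes of (real) dimensions at most $n$ and $n-1$, so the long exact sequence of the pair gives $H^{j+1}(X,Y;G)=0$ for $j\ge n$ and every coefficient group $G$. The obstructions to extending $\bar g\colon Y\to S^{N}$ over $X$ lie in $H^{j+1}\bigl(X,Y;\pi_j(S^{N})\bigr)$; for $j\ge n$ these vanish by the dimension bound, while for $j\le n-1$ they vanish because $\pi_j(S^{N})=0$ whenever $j<N$. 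The crux is that the value $s(X)=\lfloor \tfrac12 n\rfloor+1$ chosen in \eqref{eq3.1} is \emph{exactly} large enough to force $N=2s(X)-1\ge n$, so that $j\le n-1<N$ throughout the relevant range. This is where the arithmetic of $s(X)$ enters decisively, and it is the heart of the matter.

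Granting the extension, $g$ is zero-free on $X$, so its components have no common zero and hence form a unimodular element of $\mathcal O(X)^{s(X)}$ mapping to $\bar g$ modulo $J(f)$. This establishes surjectivity of the reduction map, and the Claim follows from Lemma~\ref{lem2.6}. The step I expect to be the main obstacle is the analytic one — realizing the continuous zero-free extension by a holomorphic map matching $\bar g$ exactly modulo $J(f)$ — which rests on the interpolation form of the Oka principle over the non-reduced subspace $\{f=0\}$; the obstruction computation itself is comparatively routine, being dictated entirely by the value of $s(X)$.
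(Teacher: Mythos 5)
Your skeleton matches the paper's: reduce via Lemma~\ref{lem2.6} to the surjectivity of $U_{s(X)}(\mathcal O(X))\to U_{s(X)}(\mathcal O(X)/J(f))$, identify the quotient with sections of the (non-reduced) structure sheaf on the zero space via Cartan's Theorem B, and observe that $2s(X)-1\ge \dim X$ kills the obstructions in $H^{j+1}(\cdot,\cdot;\pi_j(\mathbb S^{2s(X)-1}))$ — this is exactly the paper's Lemma~\ref{lem3.4}, down to the same use of \cite{H} and the Oka/Ramspott principle. But there is a genuine gap in your dimension count. You assert that $Y=\mathcal Z(f)$ has the homotopy type of a CW complex of dimension at most $n-1$. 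This fails when $X$ is connected but reducible and $f$ vanishes identically on an irreducible component of top dimension $n$: then $Y$ contains that whole component, $H^{n}(Y;G)$ can be nonzero, and for odd $n$ (where $N=2s(X)-1=n$ and $\pi_N(\mathbb S^N)=\mathbb Z$) the obstruction group $H^{N+1}(X,Y;\pi_N(\mathbb S^N))$ need not vanish, so your extension argument breaks down as written. The paper forestalls precisely this by splitting $X$ into $X_f$ (components meeting $\mathcal Z(f)$ in codimension one or not at all) and $X'$ (components on which $f\equiv 0$), disposing of $X'$ by Lemma~\ref{lem1} (there $f|_{X'}=0$, so the first $s(X)$ entries are already unimodular), and running the obstruction computation only for the pair $(X_f,\mathcal Z'(f))$ with $\dim\mathcal Z'(f)\le n-1$. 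You need this decomposition, or an excision argument replacing $(X,Y)$ by $(X_f,\mathcal Z(f)\cap X_f)$, to make the cohomological step correct.

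The second divergence is how you return from the reduced zero set to the non-reduced quotient $\mathcal O(X)/J(f)$. You appeal to an Oka principle with interpolation modulo the principal ideal sheaf $f\mathcal O_X$; note that interpolating only on the set $\mathcal Z(f)$ is not enough (a function vanishing on $\mathcal Z(f)$ need not lie in $f\mathcal O_X$ when $f$ has multiple zeros), so you genuinely need the divisorial/jet form of the interpolation theorem, which you acknowledge but do not substantiate. The paper avoids this analytic input altogether: it extends the unimodular column over the reduced $\mathcal Z'(f)$ by Ramspott's theorem, completes it to an invertible matrix (Lemma~\ref{lem2.4}), and corrects the discrepancy with the desired non-reduced data — a matrix congruent to the identity modulo the Jacobson radical — by the elementary-matrix factorization of Lemma~\ref{lem2.3}, whose factors lift through the surjection \eqref{eq3.4}. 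If the interpolation form of the Oka principle modulo $f\mathcal O_X$ can indeed be quoted, your route is shorter; otherwise the paper's matrix argument is the self-contained substitute you would have to supply.
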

This is obvious for $f= 0$ or $f\in \mathcal O(X)^{-1}$.  Next, assuming that $f\not\in \mathcal O(X)^{-1}\cup\{0\}$ by $\mathcal Z(f)\subset X$ we denote its zero locus. 
Since $X$ is connected, there exists an irreducible component $\tilde X$ of $X$ such that the complex analytic subset  $\mathcal Z(f)\cap\tilde X$ of $\tilde X$ has dimension ${\rm dim}\,\tilde X-1\, (\le {\rm dim}\, X-1)$.  By $X_f$ we denote the union of all such components $\tilde X$ and all irreducible components $\bar X$ of $X$ such that $\mathcal Z(f)\cap\bar X=\emptyset$. Let $X':={\rm cl}(X\setminus X_f)$. Then $(X_f,\mathcal O_{X_f})$, $\mathcal O_{X_f}:=\mathcal O_X|_{X_f}$, and $(X',\mathcal O_{X'})$, $\mathcal O_{X'}:=\mathcal O_X|_{X'}$, are complex analytic subspaces of $X$ and $f|_{X'}= 0$.  
\begin{Lm}\label{lem1}
If an element $(f_1,\dots, f_{s(X)},f)\in U_{s(X)+1}(\mathcal O(X))$ is reducible over $X_f$, i.e., there exist $h_1,\dots, h_{s(X)}\in \mathcal O(X_f)$ such that
\begin{equation}\label{eq3.3}
(f_1|_{X_f}-h_1 f|_{X_f},\dots, f_{s(X)}|_{X_f}-h_{s(X)} f|_{X_f})\in U_{s(X)}(\mathcal O(X_f)),
\end{equation}
then it is reducible.
\end{Lm}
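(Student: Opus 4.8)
The plan is to reduce reducibility over all of $X$ to the two pieces already in hand: reducibility over $X_f$ (the hypothesis \eqref{eq3.3}) and the automatic unimodularity of $(f_1,\dots,f_{s(X)})$ over $X'$, which holds because $f|_{X'}=0$. Concretely, I would first extend the given $h_1,\dots,h_{s(X)}\in\mathcal O(X_f)$ to global functions $c_1,\dots,c_{s(X)}\in\mathcal O(X)$. Since $X_f$ is a complex analytic subspace of the Stein space $X$, its ideal sheaf is coherent and Cartan's Theorem B forces its first cohomology to vanish, so the restriction homomorphism $\mathcal O(X)\rightarrow\mathcal O(X_f)$ is surjective and such $c_i$ exist. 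Setting $g_i:=f_i-c_i f\in\mathcal O(X)$, it then suffices to prove that $(g_1,\dots,g_{s(X)})\in U_{s(X)}(\mathcal O(X))$.

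Next I would use the decomposition $X=X_f\cup X'$ and check that the $g_i$ have no common zero on $X$. On $X_f$ one has $g_i|_{X_f}=f_i|_{X_f}-h_i f|_{X_f}$, and by \eqref{eq3.3} this tuple is unimodular in $\mathcal O(X_f)$, so the $g_i$ cannot all vanish at a point of $X_f$. On $X'$, since $f|_{X'}=0$, we have $g_i|_{X'}=f_i|_{X'}$; writing the unimodularity witnesses $b_1,\dots,b_{s(X)},b$ for $(f_1,\dots,f_{s(X)},f)$, so that $\sum_i b_i f_i+bf=1$, and restricting this relation to $X'$ kills the last term and yields $\sum_i (b_i|_{X'})(f_i|_{X'})=1$. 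Hence the $g_i$ have no common zero on $X'$ either. As every point of $X$ lies in $X_f$ or in $X'$, the $g_i$ have no common zero on $X$.

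Finally, I would convert ``no common zeros'' into genuine unimodularity by the standard Cartan argument on a Stein space: the sheaf homomorphism $\mathcal O_X^{\,s(X)}\rightarrow\mathcal O_X$, $(t_i)\mapsto\sum_i t_i g_i$, is surjective (at each $x\in X$ some $g_i(x)\neq 0$, whence $g_i$ is a unit in the reduced local ring $\mathcal O_x$); its kernel $\mathcal K$ is coherent, and Cartan's Theorem B gives $H^1(X,\mathcal K)=0$, so the induced map on global sections is onto. Thus $1=\sum_i A_i g_i$ for some $A_i\in\mathcal O(X)$, which exhibits $(g_1,\dots,g_{s(X)})\in U_{s(X)}(\mathcal O(X))$ and shows that $(f_1,\dots,f_{s(X)},f)$ is reducible. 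The only step requiring real input, and the one I regard as the crux, is this passage from a local (no common zeros) to a global (unit ideal) statement; it rests entirely on the Stein hypothesis via Cartan's Theorem B, while everything else is bookkeeping with the covering $X=X_f\cup X'$ and the vanishing $f|_{X'}=0$.
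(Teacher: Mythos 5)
Your argument is correct and follows essentially the same route as the paper: extend the $h_i$ to global functions via Cartan's Theorem B, observe that the resulting tuple $(f_i-c_if)$ has no common zeros by combining the hypothesis on $X_f$ with $f|_{X'}=0$ and the unimodularity of the original tuple on $X'$, and then pass from ``no common zeros'' to unimodularity. The only difference is cosmetic: where the paper simply cites the corona theorem for Stein spaces for the last step, you supply its standard proof (surjectivity of the stalkwise sheaf map plus vanishing of $H^1$ of the coherent kernel).
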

\begin{proof}
Due to Cartan's Theorem B for $X$ there are some $h_i'\in\mathcal O(X)$ such that $h_i'|_{X_f}=h_i$, $i=1,\dots, s(X)$. From here, \eqref{eq3.3} and the fact that $f|_{X'}= 0$ we obtain that the family of holomorphic functions
$\{f_i-h_i' f\}_{1\le i\le s(X)}$ does not have common zeros. Therefore by the corona theorem for Stein spaces  
\[
(f_1-h_1' f,\dots, f_{s(X)}-h_{s(X)}' f)\in U_{s(X)}(\mathcal O(X)).
\]
Thus $(f_1,\dots, f_{s(X)}, f)\in U_{s(X)+1}(\mathcal O(X))$ is reducible. 
\end{proof}
Let $J(f)\subset\mathcal O(X_f)$ be the principal ideal generated by $f|_{X_f}$. In order to confirm the Claim, due to Lemmas \ref{lem2.6} and \ref{lem1}, it suffices to prove    the following result.
\begin{Proposition}\label{prop1.3}
The map $U_{s(X)}(\mathcal O(X_f))\rightarrow U_{s(X)}(\mathcal O(X_f)/J(f))$ induced by the quotient homomorphism is surjective. 
\end{Proposition}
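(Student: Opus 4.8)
The plan is to translate the surjectivity statement into a problem about extending a nowhere-vanishing holomorphic section across the zero locus of $f$, and then to solve that problem by soft topology followed by the Oka principle. First I would fix a unimodular row $\bar g=(\bar g_1,\dots,\bar g_{s(X)})\in U_{s(X)}(\mathcal O(X_f)/J(f))$ and choose, using surjectivity of the quotient map, an arbitrary lift $G^{0}=(g_1^{0},\dots,g_{s(X)}^{0})\in\mathcal O(X_f)^{s(X)}$. If $\sum_i\bar b_i\bar g_i=1$ witnesses unimodularity, then lifting the $\bar b_i$ gives $\sum_i b_i^{0}g_i^{0}=1+fh$ for some $h$, so $G^{0}$ has no common zero on the analytic set $Y:=\mathcal Z(f)\cap X_f$; by the construction of $X_f$ one has $\dim Y\le\dim X-1$, and $f\not\equiv 0$ on each component of $X_f$. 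It therefore suffices to produce a holomorphic $G=(g_1,\dots,g_{s(X)})$ on $X_f$ that is nowhere vanishing and satisfies $G\equiv G^{0}\pmod{J(f)}$ (equivalently $g_i=g_i^{0}+c_i f$): nonvanishing of $G$ is, via the corona theorem for Stein spaces used in the proof of Lemma \ref{lem1}, exactly the unimodularity of $G$, and the congruence guarantees that $G$ maps to $\bar g$.

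To build such a $G$ I would first produce it continuously. Regard $G^{0}$ as a section of the trivial bundle $X_f\times\mathbb C^{s(X)}$ that is nowhere zero over $Y$, and view the task as extending a map $Y\to\mathbb C^{s(X)}\setminus\{0\}$ over $X_f$. Since $\mathbb C^{s(X)}\setminus\{0\}\simeq S^{2s(X)-1}$ is $(2s(X)-2)$-connected and, by \cite{H}, $X_f$ has the homotopy type of a $CW$ complex of dimension $\le\dim X$, the obstructions to extension lie in $H^{q+1}(X_f,Y;\pi_q(S^{2s(X)-1}))$; these are nonzero only when $q\ge 2s(X)-1$, whereas $H^{q+1}(X_f,Y)=0$ once $q+1>\dim X$. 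The choice $s(X)=\lfloor\tfrac12\dim X\rfloor+1$ is made exactly so that $2s(X)-1\ge\dim X$, which forces every obstruction group to vanish and yields a continuous nowhere-zero extension. This is the step in which the precise value of $s(X)$ enters.

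Next I would upgrade the continuous extension to a holomorphic one. The target $\mathbb C^{s(X)}\setminus\{0\}$ is a complex homogeneous space (hence an Oka manifold), so Grauert's Oka principle, in the interpolation form over a closed complex subvariety given in \cite{F}, applies: the continuous extension, which near $Y$ agrees with the holomorphic map $G^{0}$, is homotopic to a holomorphic map $X_f\to\mathbb C^{s(X)}\setminus\{0\}$ still coinciding with $G^{0}$ to the order prescribed by the ideal $J(f)$. This produces the desired $G$ and completes the proof. I note that this is the same circle of Grauert/Oka ideas underlying Lemma \ref{lem2.4}, and one could alternatively package the geometric input through that lemma.

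The step I expect to be the main obstacle is precisely this last one, and specifically the need to interpolate modulo $f$ rather than merely on the reduced set $Y$. Matching $G^{0}$ only on $Y_{\mathrm{red}}$ would give a nowhere-zero lift of a row differing from $\bar g$ by an element of the nilradical of $\mathcal O(X_f)/J(f)$, which need not be $\bar g$ itself; to hit $\bar g$ exactly one must carry out the extension on the possibly non-reduced subscheme cut out by $f$, i.e. use the jet-interpolation strength of the Oka principle. Verifying that the interpolation can be performed to this order while keeping the section nowhere vanishing, uniformly across the non-reduced structure, is the delicate point of the argument.
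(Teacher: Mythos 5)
Your reduction of the statement to producing a nowhere-vanishing $G\in\mathcal O(X_f)^{s(X)}$ with $G\equiv G^{0}\pmod{J(f)}$ is correct, and your obstruction-theoretic step is essentially the paper's Lemma \ref{lem3.4} (the paper passes from continuous to holomorphic via Ramspott's theorem \cite{R}; the dimension count is the same). The gap is exactly where you place it, and it is not merely ``delicate'' --- the tool you cite does not deliver the conclusion. The interpolation forms of the Oka principle fix the value, or the $k$-jet for a \emph{fixed finite} $k$, of the map along the reduced subvariety $Y$; what you need is that $G-G^{0}$ lie in the principal ideal $J(f)$. Locally this means matching $G^{0}$ to the vanishing order of $f$ along $Y$, and on a non-compact $X_f$ that order need not be bounded, so no single finite-order jet interpolation suffices; nor is there an off-the-shelf Oka interpolation theorem modulo an arbitrary coherent ideal sheaf such as $\mathcal J_f$. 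A workable analytic variant would be to seek $C$ with $G^{0}+fC$ nowhere zero, i.e.\ a holomorphic section of $X_f\times\mathbb C^{s(X)}$ avoiding the subvariety $\{(x,c):G^{0}(x)+f(x)c=0\}$, but the relevant submersion degenerates over $Y$, so this needs the stratified Oka principle of \cite[Th.\,8.3]{F} and a genuine further argument you have not supplied.

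The paper sidesteps all of this algebraically. After the reduced problem is solved (your topological step), one has two unimodular rows over $\mathcal O(X_f)/J(f)\cong\Gamma(\mathcal Z'(f),\mathcal O_{X_f}/\mathcal J_f)$ --- the target and the image of the lift --- agreeing modulo the Jacobson radical of that ring (Lemma \ref{lem2.1}). Each is completed to an invertible matrix (Lemma \ref{lem2.4} over the reduced ring, then Lemma \ref{lem2.2} to pass to the non-reduced one); the ratio of the two completions has the form $I+L$ with $L$ over the radical, hence by Lemma \ref{lem2.3} factors into elementary matrices and a matrix $\exp(u)\cdot I$, and every such factor lifts through the surjection $\mathcal O(X_f)\to\mathcal O(X_f)/J(f)$. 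Multiplying the already-constructed lift by the lifted matrix finishes the proof. To repair your argument you must either carry out the stratified-submersion version sketched above or replace your interpolation step by this matrix-theoretic correction.
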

\begin{proof}
Let ${\mathcal Z}'(f):=\mathcal Z(f)\cap X_f$ and
 $\mathcal J_f\subset \mathcal O_{X_f}$ be the sheaf of principal ideals generated by germs of $f$. Consider the complex analytic space $({\mathcal Z}'(f), (\mathcal O_{X_f}/\mathcal J_f)|_{{\mathcal Z}'(f)})$. Its reduction  is the complex analytic subspace $({\mathcal Z}'(f), \mathcal O_{{\mathcal Z}'(f)})$,
 $\mathcal O_{{\mathcal Z}'(f)}:=\mathcal O_{X_f}|_{\mathcal Z '(f)}$, of $X_f$. By $r$ we denote the corresponding reduction homomorphism of the structure sheaves. Applying Cartan's Theorem B  to the long exact cohomology sequence obtained from the short sequence of sheaves
 \[
 0\rightarrow \mathcal J_f\rightarrow\mathcal O_{X_f}\rightarrow \mathcal O_{X_f}/\mathcal J_f\rightarrow 0,
 \]
we get the following sequence
\begin{equation}\label{eq3.3'}
0\rightarrow \Gamma(X_f,\mathcal J_f)\rightarrow \Gamma(X_f,\mathcal O_{X_f})\rightarrow
\Gamma(X_f, \mathcal O_{X_f}/\mathcal J_f)\rightarrow 0.
\end{equation}
Since $\mathcal Z'(f)$ is of the complex codimension one in each irreducible component of $X_f$ having a nonvoid intersection with $\mathcal Z'(f)$, under the natural identification of $\Gamma(X_f,\mathcal O_{X_f})$ with $\mathcal O(X_f)$  the space $\Gamma(X_f,\mathcal J_f)$ coincides with $J(f)$. Similarly, since each section in $\Gamma(X_f, \mathcal O_{X_f}/\mathcal J_f)$ equals zero outside $\mathcal Z'(f)$, the latter space is naturally identified with $\Gamma(\mathcal Z'(f), \mathcal O_{X_f}/\mathcal J_f)$.  These and \eqref{eq3.3'} give the following exact sequence
\begin{equation}\label{eq3.4}
0\rightarrow J(f)\rightarrow \mathcal O(X_f)\rightarrow \Gamma(\mathcal Z'(f), \mathcal O_{X_f}/\mathcal J_f)\rightarrow 0.
\end{equation}
Thus the statement of the proposition is equivalent to the following one:\smallskip

\noindent (*) {\em The map $\phi: U_{s(X)}(\mathcal O(X_f))\rightarrow U_{s(X)}(\Gamma(\mathcal Z'(f), \mathcal O_{X_f}/\mathcal J_f))$ induced by the quotient homomorphism and the restriction to $\mathcal Z'(f)$ is surjective.}\smallskip

In turn, the reduction $r$ induces a homomorphism 
\[
r^*: \Gamma(\mathcal Z'(f), \mathcal O_{X_f}/\mathcal J_f))\rightarrow \mathcal O(\mathcal Z'(f)),
\]
surjective due to Cartan's Theorem B. 

In the sequel,  for a finite-dimensional complex vector space $V$ by $\mathfrak r^*$  we denote the linear map ${\rm id}_V\otimes r^*: V\otimes_{\mathbb C} \Gamma(\mathcal Z'(f), \mathcal O_{X_f}/\mathcal J_f))\rightarrow V\otimes_{\mathbb C}  \mathcal O(\mathcal Z'(f))$. (The choice of $V$ will be understood from the context.) In particular, $\mathfrak r^*={\rm id}_{\mathbb C^{s(X)}}\otimes r^*$ sends
$U_{s(X)}(\Gamma(\mathcal Z'(f), \mathcal O_{X_f}/\mathcal J_f))$ to $U_{s(X)}(\mathcal O(\mathcal Z'(f)))$.
Thus we obtain the following diagram
\begin{equation}\label{eq3.5}
U_{s(X)}(\mathcal O(X_f))\stackrel{\phi}{\longrightarrow} U_{s(X)}(\Gamma(\mathcal Z'(f), \mathcal O_{X_f}/\mathcal J_f))\stackrel{\frak r^*}{\longrightarrow}U_{s(X)}( \mathcal O(\mathcal Z'(f))),
\end{equation}
where $\mathfrak r^*\circ\phi$ is the map induced by the restriction to $\mathcal Z'(f)$.

To check (*), first, we prove
\begin{Lm}\label{lem3.4}
The map $\frak r^*\circ \phi:U_{s(X)}(\mathcal O(X_f))\rightarrow U_{s(X)}(\mathcal O(\mathcal Z'(f)))$ is surjective.
\end{Lm}
\begin{proof}
Due to the corona theorem on a Stein space,
sets $U_{s(X)}(\mathcal O(X_f))$ and $U_{s(X)}(\mathcal O(\mathcal Z'(f)))$ coincide with $\mathcal O(X_f,(\mathbb C^{s(X)})^*)$  and $\mathcal O(\mathcal Z'(f),(\mathbb C^{s(X)})^*)$, respectively; here $(\mathbb C^k)^*:=\mathbb C^k\setminus\{0\}$. In turn, $(\mathbb C^{s(X)})^*$ is homotopic to $\mathbb S^{2s(X)-1}$ 
(the $(2s(X)-1)$- dimensional unit Euclidean sphere) while,  due to \cite{H}, $\mathcal Z'(f)$ is homotopic to a $CW$ complex of dimension ${\rm dim}\, \mathcal Z'_f\, (={\rm dim}\, X_f -1\le {\rm dim}\, X -1<2s(X)-1)$, see \eqref{eq3.1}. 
Let us show that 

{\em Each map in $C(\mathcal Z'(f), (\mathbb C^{s(X)})^*)$ is extendable to a map in} $C(X_f,(\mathbb C^{s(X)})^*)$. 

Indeed, since the space $(\mathbb C^{s(X)})^*$ is a simple absolute neighbourhood retract, in order to prove the previous statement it suffices to check that $H^{l+1}(X,\mathcal Z'(f);\pi_l)=0$ for each $0<l<{\rm dim}\, (X_f\setminus\mathcal Z'(f))$, where $\pi_l$ is the $l$th homotopy group of $\mathbb S^{2s(X)-1}$, see, e.g., \cite[p.\,348,\,(5.3)]{Hu}.  Clearly, this is true for $0<l\le 2s(X)-2$ (for $s(X)>1$) because $\pi_l=0$ in this case. Next, if $l=2s(X)-1+m$ for some $m\ge 0$, then from the long exact cohomological sequence of the pair $(X_f,\mathcal Z'(f))$ with coefficients in $\pi_{2s(X)-1+m}$ we obtain 
\[
\begin{array}{r}
\displaystyle
\cdots\rightarrow H^{2s(X)-1+m}(\mathcal Z'(f);\pi_{2s(X)-1+m})\rightarrow H^{2s(X)+m}(X_f,\mathcal Z'(f);\pi_{2s(X)-1+m})\\
\\
\displaystyle \rightarrow H^{2s(X)+m}(X_f;\pi_{2s(X)-1+m})\rightarrow\cdots .
\end{array}
 \]
 The first term here is zero because $\mathcal Z'(f)$ is homotopic to a $CW$ complex of dimension $<2s(X)-1$ while the last term is zero because,  due to \cite{H},  $X_f$ is homotopic to a $CW$ complex of dimension ${\rm dim}\, X_f\le{\rm dim}\, X\le 2s(X)-1$.  Therefore the intermediate term of the above sequence is zero as well. This proves the required statement.
 
 In particular, each map in $\mathcal O(\mathcal Z'(f),(\mathbb C^{s(X)})^*)$ extends  to a map in $C(X_f, (\mathbb C^{s(X)})^*)$. Hence, by the Ramspott theorem \cite{R} it extends also to a map in $\mathcal O(X_f, (\mathbb C^{s(X)})^*)$. This shows that $\mathfrak r^*\circ\phi$ is surjective and completes the proof of the lemma.
\end{proof}

Now using the lemma let us prove the proposition. 

Let $\mathcal R:=\Gamma(\mathcal Z'(f),\frak n(\mathcal O_{X_f}/\mathcal J_f))$ denote the Jacobson radical of $\Gamma(\mathcal Z'(f),\mathcal O_{X_f}/\mathcal J_f)$, see Lemma \ref{lem2.1}. Due to Lemma \ref{lem3.4}, given $h=(h_1,\dots, h_{s(X)})\in U_{s(X)}(\Gamma(\mathcal Z'(f),\mathcal O_{X_f}/\mathcal J_f)$ there exists $f=(f_1,\dots, f_{s(X)})\in U_{s(X)}(\mathcal O(X_f))$ such that
\[
\mathfrak r^*(h)=f|_{\mathcal Z'(f)}\, (:=\mathfrak r^*(\phi(f))).
\]
In order to prove (*) we must show that there is some $\tilde h\in U_{s(X)}(\mathcal O(X_f))$ such that $\phi(\tilde h)=h$.

Observe that by Lemma \ref{lem2.4}, $\mathfrak r^*(h)$ considered as a column-matrix can be extended to a matrix $(\mathfrak r^*(h), F)\in GL_{s(X)}(\mathcal O(\mathcal Z'(f)))$. Since, due to Cartan's Theorem B, the homomorphism $r^*:\Gamma(\mathcal Z'(f),\mathcal O_{X_f}/\mathcal J_f)\rightarrow \Gamma(\mathcal Z'(f), \mathcal O_{\mathcal Z'(f)})\cong \mathcal O(\mathcal Z'(f))$ induced by the reduction map  is surjective, there is a matrix $\tilde F\in M_{s(X)\times (s(X)-1)}(\Gamma(\mathcal Z'(f),\mathcal O_{X_f}/\mathcal J_f))$  such that $\mathfrak r^*(\tilde F)=F$ (here $\mathfrak r^*={\rm id}_V\otimes r^{*}$ with $V:=M_{s(X)\times (s(X)-1)}(\mathbb C)$). Note that
\[
r^*({\rm det}(h, \tilde F))={\rm det}(\mathfrak r^*(h), F)\in \mathcal O(\mathcal Z'(f))^{-1}.
\]
Therefore by Lemma \ref{lem2.2}, $(h,F)\in GL_{s(X)}(\Gamma(\mathcal Z'(f),\mathcal O_{X_f}/\mathcal J_f))$.

Next, considering $\phi(f)$ as a column-matrix, we extend it to the $s(X)\times s(X)$ matrix $(\phi(f),\tilde F)$.  Since $\mathfrak r^*(h)=
\mathfrak r^*(\phi(f))$,  the matrix $( \phi(f),\tilde F)\in GL_{s(X)}(\Gamma(\mathcal Z'(f),\mathcal O_{X_f}/\mathcal J_f))$ and 
 \[
 (h,\tilde F)\cdot (\phi(f),\tilde F)^{-1}=:G
\]
is an invertible matrix of the form $I_{s(X)}+L$, where entries of  $L$ belong to $\mathcal R$. Due to Lemma \ref{lem2.3}, $G$ is a product of a matrix $\exp(u)\cdot I_{s(X)}$ for some $u\in \mathcal R$  and finitely many  elementary matrices with entries in $\Gamma(\mathcal Z'(f),\mathcal O_{X_f}/\mathcal J_f)$. Due to \eqref{eq3.4}, each matrix in the factorization  of $G$ is the image of an invertible matrix, a multiple of $I_{s(X)}$ or an elementary matrix, with entries in $\mathcal O(X_f)$. Hence, there exists some $\tilde G\in GL_{s(X)}(\mathcal O(X_f))$ such that $\Phi(\tilde G)=G$, where $\Phi$ is  defined by the application of $\phi$ to matrix columns. Then we have $(h,\tilde F)=\Phi(\tilde G)\cdot (\phi(f),\tilde F)$.  In turn, this gives the identity of the first columns
\[
h=\Phi(\tilde G)\cdot\phi(f)=\phi(\tilde G\cdot f).
\]
Since $\tilde h:=\tilde G\cdot f\in U_{s(X)}(\mathcal O(X_f))$, the latter proves  the surjectivity of $\phi$, cf. (*).

The proof of the proposition is complete.
 \end{proof}
 
 As it was mentioned above,  Lemmas \ref{lem2.6} and \ref{lem1} and Proposition \ref{prop1.3} prove the Claim, i.e.,  ${\rm sr}(\mathcal O(X))\le s(X)$.\smallskip
 
Now, to complete the proof of Theorem \ref{teo1} let us show that 
\begin{equation}\label{eq3.7}
s(X)\le {\rm sr}(\mathcal O(X)).
\end{equation}
In order to prove the inequality it suffices to point out an irreducible element of $U_{s(X)}(\mathcal O(X))$.

Let $\tilde X\subset X$ be an irreducible component of the maximal dimension $d\, (:={\rm dim}\, X)$ and let
$x\in \tilde X$ be a regular point. Due to Cartan's Theorems there exist  holomorphic functions $f_1,\dots, f_d$ on $X$ and an open neighbourhood $U$ of $x$ such that the holomorphic map $F=(f_1,\dots, f_d): X\rightarrow\mathbb C^d$ is one-to-one on ${\rm cl}(U)$, maps $x$ to $0$ and $U$ biholomorphically onto the open unit ball $\mathbb B^d\subset\mathbb C^d$. Let $A(U)$ be the  algebra of holomorphic functions on $U$ which extend continuously to its boundary.  By our construction, $A(U)$ is isomorphic by means of the pullback of   
 $F|_{{\rm cl}(U)}$ to the (similarly defined) algebra $A(\mathbb B^d)$.  Consider the element 
\[
u=(z_1,z_3,\dots, z_{2s(X)-3}, p(z))\in U_{s(X)}(\mathcal O(\mathbb C^d)),\quad z=(z_1,\dots, z_d)\in\mathbb C^d,
\] 
where $p(z)=z_1z_2+z_3z_4+\cdots +z_{2s(X)-3\,}z_{2s(X)-2}-1$.

It was shown in the proof of \cite[Th.\,3.12]{CS2} that $u|_{{\rm cl}(\mathbb B^d)}\in U_{s(X)}(A(\mathbb B^d))$ is irreducible.  This implies that the restriction to ${\rm cl}(U)$ of $F^*u:=(f_1,f_3,\dots, f_{2s(X)-3}, p(F))\in U_{s(X)}(\mathcal O(X))$ is irreducible (in $U_{s(X)}(A(U))$) and therefore $F^*u$ is irreducible. 
This proves inequality \eqref{eq3.7}.

The proof of Theorem \ref{teo1} is complete.
\end{proof}

\sect{Proof of Proposition \ref{prop2.1}}
(1) Let $H\in SL_n(\Gamma(X,\mathcal O_X))$ be such that $\widehat{H}_n\in E_n(\mathcal O(X))$.  Since, due to Cartan's Theorem B and Lemma \ref{lem2.2}, the homomorphism $\widehat{\,}_n$ is surjective, the latter implies that $H=H^1\cdots H^l\, F$, where $H^i$, $1\le i\le l$, are alternating upper and lower triangular unipotent matrices in $SL_n(\Gamma(X,\mathcal O_X))$ and $F=I_n+G$ for some matrix $G$ with entries in the Jacobson radical of $\Gamma(X,\mathcal O_X))$, see Lemma \ref{lem2.1}. By virtue of Lemma \ref{lem2.3}, $F\in E_n(\Gamma(X,\mathcal O_X))$. Hence, $H\in E_n(\Gamma(X,\mathcal O_X))$ as well. 

The converse statement asserting that if $H\in E_n(\Gamma(X,\mathcal O_X))$ then $\widehat{H}_n\in E_n(\mathcal O(X))$ is obvious.

Next, let $ut_n(\Gamma(X,\mathcal O_X))$ be the minimal number $t$ such that every matrix in $E_n(\Gamma(X,\mathcal O_X))$ is a product of $t$ matrices such that each of them is either upper triangular with $1$ along the main diagonal or lower triangular with $1$ along the diagonal and the first matrix is upper triangular. Then we have for all $n\ge 2$, see \cite[Lm.\,7]{DV},
\begin{equation}\label{eq5.1}
t_n(\Gamma(X,\mathcal O_X))\le ut_n(\Gamma(X,\mathcal O_X))\le ut_2(\Gamma(X,\mathcal O_X))\le t_2(\Gamma(X,\mathcal O_X))+1.
\end{equation}
Also, due to \cite[Th.\,3.1]{IK2},
\begin{equation}\label{eq5.2}
t_2(\mathcal O(X))\le t_2(C(X))+2\le v(2d)+2.
\end{equation}
Once again, Cartan's Theorem B and Lemma \ref{lem2.2} imply that the homomorphism 
\[
\widehat{\,}_2: SL_2(\Gamma(X,\mathcal O_X))\rightarrow SL_2(\mathcal O(X))
\]
is surjective and its kernel $K_2$ consists of matrices with determinant one of the form $F=I_2+G$, where all entries of $G$ belong to the Jacobson radical of $\Gamma(X,\mathcal O_X)$. Applying the Gauss-Jordan elimination process and then the Whitehead lemma to such $F$ we write it as a product $F=F_1F_2 F_3F_4$, where $F_i\in K_2$, $1\le i\le 4$, are alternating upper and lower triangular unipotent matrices.
Now, the surjectivity of $\,\widehat{\,}_2$ and the normality of $K_2$ imply, in view of \eqref{eq5.2}, that each  $H\in E_2(\Gamma(X,\mathcal O_X))$  has a form 
\[
H=H^1 F H^2 \cdots H^l,
\]
where $H^i$, $1\le i\le l$, are alternating upper and lower triangular unipotent matrices in $SL_2(\Gamma(X,\mathcal O_X))$, $F\in K_2$ and $ l\le v(2d)+2$. Writing $F=F_1F_2 F_3F_4$ with $\{F_i\}\subset K_2$ as above such that the matrices $H_1$ and $F_1$ are both either upper or lower triangular we get that the matrices $F_4$ and $H^2$ are both either upper or lower triangular as well. Hence, we represent $H$ as a product $(H^1F_1)F_2 F_3 (F_4H_2)H_3\cdots H_l$ of at most $v(2d)+4$ triangular unipotent matrices in $SL_2(\Gamma(X,\mathcal O_X))$. Together with \eqref{eq5.1} this gives the required inequality 
\[
\sup_n t_n(\Gamma(X,\mathcal O_X))\le v(2d)+5.
\]

(2) Due to  \cite[Th.\,2.3]{IK1}, \cite[Th.\,20(b)]{DV}, our Theorem \ref{teo1} and inequalities \eqref{eq5.1}, 
\eqref{eq5.2} $t_n(\Gamma(X,\mathcal O_X))\le 6$
for all  $n\ge  n(d):=(\lfloor \frac d2 \rfloor +2)\cdot (\lfloor \frac{v(2d)}{2}\rfloor+3)$.\smallskip

(3) Since by the result of \cite{H} the space $X$ is homotopic to a $CW$ complex of dimension $d$ and the homotopy groups $\pi_i(SL_n(\mathbb C))$, $i=1,2$, are trivial, for $d=1,2$ each map in $C(X, SL_n(\mathbb C))$ is null-homotopic, see, e.g., \cite[p.\,351,\,(7.4)]{Hu}. Thus in these cases $SL_n(\Gamma(X,\mathcal O_X))=E_n(\Gamma(X,\mathcal O_X))$ for all $n$ by the first statement of part (1) of the proposition. 

Next, if $d=1$, then, by Theorem \ref{teo1}, ${\rm sr}(\Gamma(X,\mathcal O_X))=1$ so that $t_n(\Gamma(X,\mathcal O_X))\le 4$  for all $n\ge 2$ by \cite[Lm.\,9]{DV}. One easily shows that
$4$ in the previous inequalities is optimal because any nontrivial diagonal matrix in $SL_n(\Gamma(X,\mathcal O_X))$, $n\ge 2$, cannot be written as a product of less than four triangular unipotent matrices.

If $d=2$, then using \cite[Th.\,5.2]{IK2} and arguments similar to those of the proof of part (1) above we obtain that $\sup_n t_n(\Gamma(X,\mathcal O_X))\le ut_2(\Gamma(X,\mathcal O_X))\le ut_2(\mathcal O(X))+2\le 7$. We leave the details to the readers.

The proof of the proposition is complete.\medskip

\noindent {\bf Acknowledgment.} I thank the anonymous referee for useful comments.


\begin{thebibliography}{}

\bibitem[AM]{AM}
M. F. Atiyah, I. G. Macdonald, Introduction to commutative algebra, Addison-Wesley Publishing Co., 1969.

\bibitem[B]{B}
H. Bass, $K$-theory and stable algebra, Publ. Mat. 
I.H.E.S. {\bf 22} (1964), 5--60.


\bibitem[C]{C}
P. M. Cohn, On the structure of the $GL_2$ of a ring, Inst. Hautes \'{E}tudes Sci. Publ. Math. {\bf 30} (1966), 5--53.

\bibitem[CL]{CL}
G. Corach and A. R. Larotonda, Stable range in Banach algebras, J. Pure Appl. Algebra {\bf 32} (1984),  289--300.

\bibitem[CS1]{CS1}
G. Corach and F. Su\'{a}rez, Stable rank in holomorphic function algebras, 
Illinois J. Math. {\bf 29} (1985), 627--639.

\bibitem[CS2]{CS2}
G. Corach and F. Su\'{a}rez, Dense morphisms in commutative Banach algebras, Trans. Amer. Math. Soc. {\bf 304} (2) (1987), 537--547.

\bibitem[DV]{DV}
R. K. Dennis and L. N. Vaserstein, On a question of M. Newman on the number of commutators, J. Algebra {\bf 118} (1988), 150--161.

\bibitem[F]{F}
F. Forstneri\v{c}, The Oka principle for sections of stratified fiber bundles, Pure Appl. Math. Q. {\bf 6} (2010), 843--874.
 
\bibitem[G]{G}
M. Gromov, Oka's principle for holomorphic sections of elliptic bundles, J. Amer. Math. Soc. {\bf 2} (1989), 851--897.


\bibitem[GR]{GR}
H. Grauert and R. Remmert, Theory of Stein spaces,  Springer, Berlin, 1979.

\bibitem[H]{H}
H. A. Hamm, Zum Homotopietyp Steinscher R\"{a}ume, J. reine angew. Math. {\bf 338} (1983), 121--135.


\bibitem[Hu]{Hu}
S. T. Hu, Mappings of a normal space into an absolute neighbourhood retract, Trans. Amer. Math. Soc. {\bf 64} (1948), 336--358.

\bibitem[Hus]{Hus}
D. Husemoller, Fibre bundles, Springer-Verlag, New York, 1994.

\bibitem[IK1]{IK1}
B. Ivarsson and F. Kutzschebauch, Holomorphic factorization of mappings into $SL_n(\Co)$, Ann.  Math. {\bf 175} (2012), 45--69.

\bibitem[IK2]{IK2}
B. Ivarsson and F. Kutzschebauch, 
On the number of factors in the unipotent factorization of holomorphic mappings into $SL_2(\mathbb C)$, Proc. Amer. Math. Soc. {\bf 140}  (2012), no. 3, 823--838.





\bibitem[R]{R}
K. J. Ramspott, Stetige und holomorphe Schnitte in B\"{u}ndeln mit homogener Faser, Math. Z. {\bf 89} (1965), 234--246.

\bibitem[Ru]{Ru}
 L. A. Rubel, Solution of problem 6117, Amer. Math. Monthly {\bf 85} (1978), 505--506.

\bibitem[S]{S}
A. A. Suslin, On the structure of the special linear group over rings of polynomials, Izv. Akad. Nauk SSSR Ser. Mat. {\bf 41} (2) (1977), 235--252.




\bibitem[V1]{V1}
L. N. Vaserstein, The stable range for rings and the dimension of topological spaces, Functional
Anal. Appl., vol. 5 (1971), 102--110.

\bibitem[V3]{V3}
L. N. Vaserstein, Reduction of a matrix depending on parameters to a diagonal form by addition operations, Proc. Amer. Math. Soc. {\bf 103} (1988), 741--746.

\bibitem[W]{W}
W. van der Kallen,  $SL_3(\mathbb C[X])$ does not have bounded word length. In: Algebraic $K$-theory, Part I (Oberwolfach, 1980), Lecture Notes in Math. {\bf 966}, Springer (1982), 357--361.

\end{thebibliography}
\end{document}